\setlist[itemize,1]{label=$\bullet$}
\setlist[itemize,2]{label=$\triangleright$}
\setlist[itemize,4]{label=$\diamond$}
\newcommand{\black}{\color{black}}
\theoremstyle{plain}%
  \newtheorem{theorem}{Theorem}[section]
  \newtheorem{corollary}[theorem]{Corollary}
  \newtheorem{proposition}[theorem]{Proposition}
  \newtheorem{lemma}[theorem]{Lemma}
  \newtheorem{definition}[theorem]{Definition}
\newfont{\hueca}{msbm10}
\def\hu #1{\hbox{\hueca #1}}\def\hu #1{\hbox{\hueca #1}}
\begin{document}

\title{Split Malcev-Poisson-Jordan algebras}

\thanks{The first author was   supported by the Centre for Mathematics of the University of Coimbra - UIDB/00324/2020, funded by the Portuguese Government through FCT/MCTES. The second author acknowledges the support by the PCI of the UCA `Teor\'\i a de Lie y Teor\'\i a de Espacios de Banach' and by the PAI with project number FQM298.}

\author[E. Barreiro]{Elisabete~Barreiro}

\address{Elisabete~Barreiro, University of Coimbra, CMUC, Department of Mathematics, Apartado 3008 EC Santa Cruz 3001-501 Coimbra, Portugal. \hspace{0.1cm} {\em E-mail address}: {\tt mefb@mat.uc.pt}}{}

\author[Jos\'{e} M. S\'{a}nchez]{Jos\'{e} M. S\'{a}nchez}

\address{Jos\'{e} M. S\'{a}nchez, Departament of Mathematics, University of Cadiz, Puerto Real (Cadiz). Espa\~na. 
\hspace{0.1cm} {\em E-mail address}: 
{\tt txema.sanchez@uca.es}}{}

\begin{abstract}
We introduce the class of split Malcev-Poisson-Jordan algebras as the natural extension of the one of split Malcev Poisson algebras, and therefore split (non-commutative) Poisson algebras. We show that a split Malcev-Poisson-Jordan algebra $P$  can be written as a direct sum $P = \oplus_{j \in J}I_j$ with any $I_j$ a non-zero ideal of $P$ in such a way that satisfies $[I_{j_1},I_{j_2}] = I_{j_1} \circ I_{j_2} = 0$ for $j_1 \neq j_2.$ Under certain conditions, it is shown that the above decomposition of $P$ is by means of the family of its simple ideals.

\medskip

{\it Keywords}: Malcev-Poisson-Jordan algebras; structure theory; roots; root spaces.

{\it 2010 MSC}: 17A60, 17B22.

\medskip

\end{abstract}

\maketitle

\section{Introduction and previous definitions}
Recently Malcev-Poisson-Jordan algebras were presented in \cite{MPJ} as a natural reformulation of Poisson algebras endowed with a Malcev structure and Jordan conditions (instead of the Lie structure and associativity, respectively) related by a Leibniz identity. Furthermore, in the referred  paper the authors presented Malcev-Poisson-Jordan structures on some classes of Malcev algebras, also study the concept of pseudo-Euclidean Malcev-Poisson-Jordan algebras and nilpotent pseudo-Euclidean Malcev-Poisson-Jordan algebras are defined.

In the present paper we introduce the class of split Malcev-Poisson-Jordan algebras as the natural extension of the one of split (non-commutative) Poisson algebras and study its structure. We define connections on the root system of a split Malcev-Poisson-Jordan algebra $P$ that becomes an equivalence relation. We prove that  $P$ can be expressed as $P = \oplus_{j \in J}I_j$ with any $I_j$ a non-zero ideal of $P$ in such a way that satisfies $[I_{j_1},I_{j_2}] = I_{j_1} \circ I_{j_2} = 0$ for $j_1 \neq j_2,$ and under additional  hypothesis these components of $P$ are simple.

The paper is structured as follows. In this section we present some basic tools needed in the sequel. Section 2 introduces the connection techniques in the framework of Malcev-Poisson-Jordan algebras and it applies these techniques to study the structure of split Malcev-Poisson-Jordan algebras. Under certain conditions, in  Section 3 it is shown that the previous result can be improved with a direct sum of simple components.

A \emph{Malcev algebra}  is an algebra $(M,[\cdot,\cdot])$ over a field $\mathbb{K}$ whose multiplication  satisfies:
\begin{itemize}
\item $[x,x]=0,$
\item $[J(x,y,z),x]=J(x,y,[x,z]),$
\end{itemize}
for all $x, y, z \in M$, where as usual $J(x,y,z):=[[x,y],z]-[[x,z],y]-[x,[y,z]]$ is the {\it Jacobian of} $x,y,z$. 

A \emph{Jordan algebra} is an algebra $P$ over a field $\mathbb{K}$ whose multiplication, defined by juxtaposition, satisfies the following axioms:
\begin{itemize}
\item $xy = yx,$
\item $(x^2 y)x = x^2(yx),$
\end{itemize}
 for all $x, y \in P$.

\begin{definition}\rm
A {\it Malcev-Poisson-Jordan algebra} $P$ (also called {\it MPJ-algebra} for short) is a $\mathbb{K}$-vector space equipped with two bilinear multiplications such that  $(P,[\cdot,\cdot])$ is a Malcev algebra, $P$ with multiplication defined by juxtaposition is a Jordan algebra and these two operations are related by the {\it Leibniz condition}:
\begin{equation}\label{Leibniz_identity}
[x,y z] = [x,y]z + y[x,z],
\end{equation}
for any $x,y,z \in P$.
\end{definition}

\noindent Poisson algebras as well as Malcev algebras  are MPJ-algebras, considering always the other product zero. Also, a  MPJ-algebra  is a Malcev Poisson algebra if the multiplication defined by juxtaposition is associative. So the present paper extends the works presented in \cite{YoPoisson, YoMalcev,  Malcev_Poisson}.

A linear subspace $S$ of a MPJ-algebra $P$ is called  a {\it subalgebra}    of $P$ if $[S,S] \subset S$ and $S  S \subset S$. A subalgebra $S$ of a MPJ-algebra $P$ is {\it abelian} if $S$ satisfies $[S,S] = SS = 0.$ An {\it ideal } of a MPJ-algebra $P$ is a subalgebra $I$ of $P$ satisfying $[I,P] \subset I$ and $IP \subset I$.

In order to study the structure of MPJ-algebras of arbitrary dimension and over an arbitrary base field, we introduce the concept of split MPJ-algebra as a MPJ-algebra in  which the underlying Malcev algebra structure is split. So, let us recall the concept of a split Malcev algebra (see \cite{YoMalcev}).
Let  $H$ be  a maximal abelian subalgebra (MASA) of a Malcev
algebra $(M,[\cdot,\cdot])$. As usual $H^*$ denotes the dual space of $H$. For a linear functional $\alpha : H \longrightarrow
\hu{K},$ we define the {\rm root space} of $M$ (with respect to $H$)
associated to $\alpha$ as the subspace $$M_{\alpha}=\Bigl\{v_{\alpha}\in M:[h,v_{\alpha}]=\alpha(h)v_{\alpha}\hspace{0.2cm} {\it for}
\hspace{0.2cm} {\it any} \hspace{0.2cm} h \in H\Bigr\}.$$ 
 The elements
$\alpha \in H^*$ satisfying $M_{\alpha} \neq 0$ are called {\rm
roots} of $M$ (with respect to $H$). We denote $\Lambda:=\{\alpha \in
H^*\setminus \{0\}:  M_{\alpha}\neq 0\}$ and say that $\Lambda$ is the {\it root system} of $M$. We say that  $M$ is a
  \emph{split Malcev algebra} (with respect to $H$) if $$M = H \oplus (\bigoplus\limits_{\alpha \in \Lambda}M_{\alpha}).$$

\begin{definition}\rm
A {\it split Malcev-Poisson-Jordan algebra} is a Malcev-Poisson-Jordan algebra in which the Malcev algebra $(P, [\cdot, \cdot])$ is split with respect to a MASA $H$ of $(P, [\cdot, \cdot])$, and  $H$ is also an abelian subalgebra of the MPJ-algebra $P$.
\end{definition}

\noindent This means that we can decompose $P$ as the direct sum $$P = H \oplus (\bigoplus\limits_{\alpha \in \Lambda} P_{\alpha})$$ where 
$$ P_{\alpha } := \Bigl\{v_{\alpha} \in P : [h,v_{\alpha}] = \alpha(h)v_{\alpha}  \mbox{ for any } h \in H \Bigr\},$$ for a linear functional $\alpha \in H^*$ and $\Lambda := \{\alpha \in H^* \setminus \{0\} : P_{\alpha} \neq 0\}$ is the corresponding {\it root system}.
The elements $\alpha \in \Lambda \cup \{0\}$ are called {\it roots} of $P$ with respect to $H$, the subspaces $P_{\alpha}$ are called {\it root spaces} of $P$ (with respect to $H$).

It is clear that the root space $P_0$ of the zero root  contains  $H$. Conversely, given any $v_0 \in {\mathcal P}_0$ we can write $v_0 = h + \sum_{i=1}^n v_{\alpha_i}$ with $h \in H$ and $v_{\alpha_i} \in P_{\alpha_i}$ for $i=1,\ldots ,n$, being $\alpha_i \in \Lambda$ with $\alpha_i \neq \alpha_j$ if $i \neq j$. Hence
$0 = [h',h+ \sum_{i=1}^n v_{\alpha_i}] = \sum_{i=1}^n \alpha_i(h')v_{\alpha_i}$ for any $h' \in H$. So, taking into account the direct character of the sum and that $\alpha_i \neq 0$, we have that any $v_{\alpha_i}=0$ and then $v_0 \in H$. Consequently
\begin{equation}\label{H}
H = P_0.
\end{equation}

It is worth to mention that throughout this paper Malcev-Poisson-Jordan algebras $P$ are considered of arbitrary dimension and over an arbitrary base field ${\hu K}$ of characteristic different from 2.  We also note that, unless otherwise stated, there is no restrictions on $\dim P_{\alpha}$, the products $[P_{\alpha},P_{-\alpha}]$, $P_{\alpha} P_{-\alpha}$ and $\{k \in \hu{K}: k\alpha \in \Lambda\}$.

\begin{lemma}\label{elprimero}
Let $P $ be a split MPJ-algebra and $ \alpha, \beta \in \Lambda \cup \{0\}$. Then the following assertions hold.
\begin{enumerate}
\item[{\rm i )}] If $[P_{\alpha},P_{\beta}] \neq 0$ with $\beta \neq \alpha$ then  $ \alpha +\beta \in \Lambda \cup \{0\}$ and  $[P_{\alpha},P_{\beta}] \subset P_{\alpha+\beta}$.
\item[{\rm ii)}] If $[P_{\alpha},P_{\alpha}]  \neq 0 $ then $2\alpha \in \Lambda \cup \{0\}$ or $ -\alpha  \in \Lambda \cup \{0\}$ and  $[P_{\alpha},P_{\alpha}] \subset P_{2\alpha} + P_{-\alpha},$ where $  P_{2\alpha} + P_{-\alpha} $  means the semidirect sum of  $ P_{2\alpha} $ and $ P_{-\alpha}.$
\item[{\rm iii)}] If $ P_{\alpha}  P_{\beta} \neq 0$ then  $ \alpha +\beta \in \Lambda \cup \{0\}$ and $P_{\alpha}  P_{\beta} \subset P_{\alpha + \beta}$.
\end{enumerate}
\end{lemma}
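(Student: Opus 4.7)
The three assertions split into two groups. Part (iii) uses the Jordan product and enters through the Leibniz identity; parts (i) and (ii) involve only the Malcev bracket and are statements about the underlying split Malcev algebra $(P,[\cdot,\cdot])$, so the cleanest route for them is to cite the root space analysis of~\cite{YoMalcev}.

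For (iii) the argument is one line. Given $v_\alpha \in P_\alpha$, $v_\beta \in P_\beta$ and any $h \in H$, the Leibniz identity~\eqref{Leibniz_identity} combined with $[h,v_\alpha]=\alpha(h)v_\alpha$ and $[h,v_\beta]=\beta(h)v_\beta$ gives
\begin{equation*}
[h,v_\alpha v_\beta]=[h,v_\alpha]\,v_\beta+v_\alpha[h,v_\beta]=\bigl(\alpha(h)+\beta(h)\bigr)\,v_\alpha v_\beta,
\end{equation*}
so $v_\alpha v_\beta\in P_{\alpha+\beta}$ and, when $P_\alpha P_\beta\neq 0$, one has $\alpha+\beta\in\Lambda\cup\{0\}$.

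If a self-contained argument for (i) and (ii) is wanted, the tool is the Malcev identity $[J(x,y,z),x]=J(x,y,[x,z])$ evaluated at $x=h\in H$, $y=v_\alpha$, $z=v_\beta$. Writing $u:=[v_\alpha,v_\beta]$ and $T:=\mathrm{ad}_h$, an expansion of $J(h,v_\alpha,v_\beta)$ using the eigenvector relations and antisymmetry yields $J(h,v_\alpha,v_\beta)=(\alpha+\beta)(h)\,u-T(u)$, and the Malcev identity then simplifies to the quadratic identity
\begin{equation*}
T^{2}(u)-\alpha(h)\,T(u)-\beta(h)(\alpha+\beta)(h)\,u=0,
\end{equation*}
which factors as $\bigl(T-(\alpha+\beta)(h)\bigr)\bigl(T+\beta(h)\bigr)\,u=0$. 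Decomposing $u$ along $P=H\oplus\bigoplus_{\gamma\in\Lambda}P_\gamma$ and evaluating componentwise forces every nonzero $u_\gamma\in P_\gamma$ to satisfy $\gamma(h)=(\alpha+\beta)(h)$ or $\gamma(h)=-\beta(h)$ for every $h\in H$. When $\alpha=\beta$ this already says $\gamma\in\{2\alpha,-\alpha\}$, which gives (ii) (and since $H$ is abelian, one also has $\alpha\neq 0$, so $2\alpha$ or $-\alpha$ lies in $\Lambda$). When $\alpha\neq\beta$, the symmetric computation with $v_\alpha$ and $v_\beta$ interchanged produces the parallel constraint $\gamma(h)=(\alpha+\beta)(h)$ or $\gamma(h)=-\alpha(h)$; combining the two forces $\gamma=\alpha+\beta$, proving~(i). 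The $H$-summand of $u$ is killed by the constant term of the identity except in the degenerate case $\alpha+\beta=0$ of~(i), which is handled separately from the specialized equation $T^{2}(u)=\alpha(h)T(u)$, showing that no nonzero root component can survive.

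The main technical point throughout is the passage from a pointwise quadratic relation of the form ``$(\gamma(h)-\mu(h))(\gamma(h)-\nu(h))=0$ for every $h\in H$'' to the functional equality $\gamma=\mu$ or $\gamma=\nu$; this rests on the standard fact that $H$ is not a union of two proper linear subspaces, and it is precisely at this step that the hypothesis $\alpha\neq\beta$ in part~(i) becomes indispensable to rule out the branch $\gamma=-\alpha=-\beta$.
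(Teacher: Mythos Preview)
Your proposal is correct and matches the paper's own proof: for (iii) you reproduce exactly the one-line Leibniz computation the authors give, and for (i)--(ii) you do what they do---cite \cite{YoMalcev}---while additionally supplying the underlying Malcev-identity computation that the paper leaves to that reference. The only wobble is in the degenerate case $\alpha+\beta=0$ of (i): the single specialized equation $T^2(u)=\alpha(h)T(u)$ still allows $\gamma=\alpha$, so you need the companion equation $T^2(u)=\beta(h)T(u)=-\alpha(h)T(u)$ from the swapped computation (together with ${\rm char}\,\mathbb{K}\neq 2$) to force $\gamma=0$; you have all the pieces, but the sentence as written understates what is being used.
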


\begin{proof} First two items can be proved as in the finite dimensional case (see \cite{YoMalcev} and refe\-rences therein). The third  can be verified by taking into account \eqref{Leibniz_identity}. In fact,
for any $h \in H$, $v_{\alpha} \in P_{\alpha}, $ and $  v_{\beta} \in P_{\beta}$  we have,
\begin{eqnarray*}
\begin{split}
[h, v_{\alpha} v_{\beta}] &=
[h, v_{\alpha} ]v_{\beta}+v_{\alpha}[h,  v_{\beta}]  =  \alpha(h)v_{\alpha} v_{\beta}+\beta(h)v_{\alpha} v_{\beta} = (\alpha  +\beta)(h)v_{\alpha} v_{\beta},
\end{split}
\end{eqnarray*}
therefore $v_{\alpha} v_{\beta} \in P_{\alpha + \beta}$.
\end{proof}
\noindent Observe that Equation \eqref{H} and Lemma \ref{elprimero}-iii) imply $$\hbox{$H  P_{\alpha} + P_{\alpha}  H \subset P_{\alpha }$, for any $\alpha \in
\Lambda$.}$$

\section{Connections of Roots. Decompositions}

In the following $P$ denotes a split Malcev-Poisson-Jordan algebra with root system $\Lambda$, and $P = H \oplus (\bigoplus_{\alpha \in \Lambda} P_{\alpha})$ its corresponding root decomposition.  Given a linear functional $\alpha : H \to \mathbb{K},$ we denote by $-\alpha : H \to \mathbb{K}$ the form in $H^*$ defined by $(-\alpha)(h) := -\alpha(h)$, for any $h \in H.$  We also represent $-\Lambda := \{-\alpha : \alpha \in \Lambda\}$ and $\pm \Lambda := \Lambda \cup -\Lambda.$
Let us denote by $$\Omega := \Bigl\{\alpha \in \Lambda : [P_{\alpha},P_{-\alpha}] \neq 0 \Bigr\} \cup \Bigl\{\alpha \in \Lambda : [[P_{\beta}, P_{-\beta}],P_{\alpha}] \neq 0 \hspace{0.1cm} {\rm for} \hspace{0.1cm} {\rm some} \hspace{0.1cm} \beta \in \Lambda \Bigr\}$$ $$\cup \Bigl\{\alpha \in \Lambda : P_{\alpha}P_{-\alpha} \neq 0 \Bigr\}  $$  

\noindent Since $[P_{\beta}, P_{-\beta}] \subset H$, the skew-symmetry of the Malcev product joint with  $\alpha([P_{\beta}, P_{-\beta}]) \neq 0$ implies $-\alpha([P_{\beta}, P_{-\beta}]) \neq 0$, we obtain that
\begin{equation}\label{equmenos1}
\hbox{if $\alpha \in \Omega$ and $-\alpha \in \Lambda$ then $-\alpha \in \Omega.$}
\end{equation}
We associate to any $\alpha \in \Omega$ the symbol $\theta_{\alpha}$ and denote $\Theta_{\Omega} := \{\theta_{\alpha} : \alpha \in \Omega\}.$
Let us define the mapping $$\star : (\pm \Lambda \cup \Theta_{\Omega}) \times \pm \Lambda \to H^* \cup \Theta_{\Omega},$$ as follows:
\begin{itemize}
\item For $\alpha \in \pm \Lambda, \alpha \star (-\alpha) := \left\{%
\begin{array}{ll}
\theta_{\alpha}, & \hbox{if $\alpha \in \Omega$} \\
  0, & \hbox{if $\alpha \notin \Omega$} \\
\end{array}%
\right. $

\item For $\alpha, \beta \in \pm \Lambda$ with $\beta \neq -\alpha$, we define $\alpha \star \beta \in H^*$ as the usual sum of linear functionals, that is $(\alpha \star \beta)(h) := (\alpha + \beta)(h)=\alpha(h)+ \beta(h)$ for any $h \in H.$

\item For $ \alpha \in  \Omega$ and $\beta \in \pm \Lambda$, $$\theta_{\alpha}\star\beta := \left\{%
\begin{array}{ll}
 \beta, & \mbox{if either} [[P_{\alpha}, P_{-\alpha}], P_{\beta}] \neq 0 \mbox{ or }  (P_{\alpha}, P_{-\alpha}) P_{\beta} \neq 0 \\
 & \mbox{or }  \bigl[P_{\alpha}P_{-\alpha},P_{\beta}\bigr]\neq 0 \mbox{ or } \bigl[P_{\alpha},P_{-\alpha}\bigr]P_{\beta} \neq 0\\
 0, & \mbox{otherwise}  
 \end{array}%
\right.  $$
\end{itemize}
where $0 : H \to {\hu K}$ denotes the zero root.

\noindent Note that Equation \eqref{equmenos1} implies that
\begin{equation*}
{\hbox{if $\alpha\star (-\alpha ) = \theta_{\alpha}$ then
$(-\alpha)\star \alpha = \theta_{-\alpha}$.}}
\end{equation*}

\begin{definition}\label{defco}\rm
Let $\alpha$, $\beta$ be two non-zero roots. We say that
$\alpha$ is {\it connected} to  $\beta$ if there exist $\alpha
_1,\dots,\alpha_n \in \pm \Lambda$ such that
\begin{itemize}
\item[{\rm i)}] $\alpha_1 = \alpha$

\item[{\rm ii)}] $\alpha_1 \star \alpha_2 \in \pm \Lambda \cup \Theta_{\Omega},$

\hspace{-0.5cm} $ (\alpha_1 \star \alpha_2) \star\alpha_3  \in \pm \Lambda \cup \Theta_{\Omega},$

\vspace{0.05cm} $\vdots$

\hspace{-0.5cm} $(\cdots((\alpha_1 \star \alpha_2) \star\alpha_3) \star \cdots) \star \alpha_{n-1} \in \pm \Lambda \cup \Theta_{\Omega}.$

\item [{\rm iii)}]
$((\cdots((\alpha_1 \star \alpha_2) \star \alpha_3) \star \cdots) \star \alpha_{n-1}) \star \alpha_n \in \{\beta,-\beta\}$.
\end{itemize}
We also say that $\{\alpha_1,\dots,\alpha_n\}$ is a {\it connection} from $\alpha$ to $\beta$.
\end{definition}

\noindent Observe that $\{\alpha\}$ is a connection from $\alpha$ to itself and to $-\alpha.$
 The proof of the next result is analogous to the proof  of \cite[Proposition 3.1]{YoMalcev} taking into account $ \Lambda$ is symmetric in the sense defined in that work.

\begin{proposition}\label{pro1}
The relation $\sim$ in $\Lambda$, defined by $\alpha \sim \beta$
if and only if $\alpha$ is connected to $\beta,$ is of
equivalence.
\end{proposition}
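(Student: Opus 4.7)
I would prove the three properties of an equivalence relation in turn. Reflexivity is immediate from the remark preceding the statement: the singleton $\{\alpha\}$ is a connection from $\alpha$ to itself, since conditions (i)--(ii) of Definition~\ref{defco} are vacuous and (iii) is met because $\alpha\in\{\alpha,-\alpha\}$.

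For symmetry, suppose $\{\alpha_1,\dots,\alpha_n\}$ is a connection from $\alpha$ to $\beta$, and write $\gamma_1:=\alpha_1=\alpha$ and $\gamma_k:=\gamma_{k-1}\star\alpha_k$ for $2\le k\le n$, so that $\gamma_k\in\pm\Lambda\cup\Theta_\Omega$ for $k<n$ and $\gamma_n\in\{\beta,-\beta\}$. My plan is to show that the reversed chain $\{\gamma_n,-\alpha_n,-\alpha_{n-1},\dots,-\alpha_2\}$ is a connection from $\beta$ (or, if $\gamma_n=-\beta$, from $-\beta$, which together with the reflexive case $\beta\sim-\beta$ via $\{\beta\}$ still gives $\beta\sim\alpha$) back to $\alpha$. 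Away from the $\theta$-symbols the partial products of the reversed chain telescope as ordinary sums of linear functionals and land successively on $\pm\gamma_{n-1},\pm\gamma_{n-2},\dots,\pm\gamma_1=\pm\alpha$. Whenever some intermediate $\gamma_k$ equals a symbol $\theta_\delta$, I would invoke~\eqref{equmenos1} together with the observation that $\alpha\star(-\alpha)=\theta_\alpha$ forces $(-\alpha)\star\alpha=\theta_{-\alpha}$, and the fact that the four alternatives in the definition of $\theta_\delta\star\eta$ are invariant under $\delta\mapsto-\delta$, to conclude that the reverse step still belongs to $\pm\Lambda\cup\Theta_\Omega$.

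For transitivity, given connections $\{\alpha_1,\dots,\alpha_n\}$ from $\alpha$ to $\beta$ and $\{\beta_1,\dots,\beta_m\}$ from $\beta$ to $\gamma$, I would concatenate them. If the first chain ends at $\beta$, the concatenation $\{\alpha_1,\dots,\alpha_n,\beta_2,\dots,\beta_m\}$ works directly; if it ends at $-\beta$, each $\beta_k$ for $k\ge 2$ is replaced by $-\beta_k$, and the same telescoping/$\theta$-symbol argument used in the symmetry step shows that the partial products from step $n+1$ onward mirror (up to a global sign) those of the second connection and hence lie in $\pm\Lambda\cup\Theta_\Omega$, with the final one landing in $\{\gamma,-\gamma\}$.

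The main obstacle is the bookkeeping forced by the $\theta_\delta$ symbols: since $\theta_\delta\star\eta$ is defined to be either $\eta$ or $0$, the index $\delta$ is forgotten after one step, so inverting the chain requires matching each forward occurrence of $\theta_\delta$ with a reverse occurrence of $\theta_{-\delta}$. This in turn requires checking that all four defining clauses of $\theta_\delta\star\eta$ are stable under $\delta\leftrightarrow-\delta$; the skew symmetry of $[\cdot,\cdot]$ (which in particular gives $[P_\delta,P_{-\delta}]=[P_{-\delta},P_\delta]$) and the commutativity of the Jordan product (which gives $P_\delta P_{-\delta}=P_{-\delta}P_\delta$) supply exactly what is needed, so the argument of \cite[Proposition 3.1]{YoMalcev} transfers with only the two additional Jordan-related clauses in the definition of $\star$ requiring a separate (but entirely parallel) verification.
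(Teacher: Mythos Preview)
Your proposal is correct and follows essentially the same approach as the paper: the paper gives no self-contained argument but simply declares the proof analogous to \cite[Proposition~3.1]{YoMalcev}, and your outline is precisely the standard reflexivity/symmetry/transitivity verification underlying that reference, together with the observation that the extra Jordan-type clauses in the definition of $\theta_\delta\star\eta$ are handled in the same way via commutativity of the Jordan product. Your explicit remark that ``the argument of \cite[Proposition~3.1]{YoMalcev} transfers with only the two additional Jordan-related clauses requiring a separate (but entirely parallel) verification'' is exactly the content of the paper's one-line proof.
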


\noindent Applying Proposition \ref{pro1}, we may consider the quotient set $$\Lambda / \sim := \{[\alpha] : \alpha \in \Lambda\},$$ where $[\alpha] := \{\beta \in \Lambda : \beta \sim \alpha\}.$
Our next goal is to associate an (adequate) ideal $I_{[\alpha]}$ of the split Malcev-Poisson-Jordan algebra $P$ to any $[\alpha]$ of $\Lambda/\sim.$ For each $[\alpha]$, with $\alpha \in \Lambda$, we define $$I_{H,[\alpha]} := span_{\mathbb{K}} \Bigl\{[P_{\beta},P_{-{\beta}}] + P_{\beta}  P_{-{\beta}} : \beta \in [\alpha] \Bigr\} \subset H$$ and $$V_{[\alpha]} := \bigoplus\limits_{\beta \in [\alpha]} P_{\beta}.$$ Finally, we denote by $I_{[\alpha]}$ the following linear subspace of $P$, $$I_{[\alpha]} := I_{H,[\alpha]} \oplus V_{[\alpha]}.$$

\noindent The first item of next proposition says that for any $[\alpha] \in \Lambda/\sim$, the linear subspace $I_{[\alpha]}$ is a subalgebra of $P.$ 

\begin{proposition}\label{pro2}
Let $\alpha , \gamma \in \Lambda$. Then the following assertions hold.

\begin{enumerate}
\item[{\rm i)}] $[I_{[\alpha]},I_{[\alpha]}] \subset I_{[\alpha]}$ and $I_{[\alpha]}  I_{[\alpha]} \subset I_{[\alpha]}.$

\item[{\rm ii)}] If $[\alpha] \neq [\gamma]$ then $[I_{[\alpha]}, I_{[\gamma]}] = I_{[\alpha]} I_{[\gamma]}=0$.
\end{enumerate}
\end{proposition}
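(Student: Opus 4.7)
The plan is to exploit the decomposition $I_{[\alpha]} = I_{H,[\alpha]} \oplus V_{[\alpha]}$ with $V_{[\alpha]} = \bigoplus_{\beta \in [\alpha]} P_{\beta}$, and check closure piece-by-piece for each of the two products. For item i), the contributions $[I_{H,[\alpha]}, I_{H,[\alpha]}]$ and $I_{H,[\alpha]} I_{H,[\alpha]}$ vanish because $H$ is an abelian subalgebra of the MPJ-algebra by hypothesis. The mixed pieces $[I_{H,[\alpha]}, V_{[\alpha]}]$ and $I_{H,[\alpha]} V_{[\alpha]}$ lie in $V_{[\alpha]}$: for $h \in H$ and $v_{\beta} \in P_{\beta}$ with $\beta \in [\alpha]$ one has $[h, v_{\beta}] = \beta(h) v_{\beta} \in P_{\beta}$, and Lemma \ref{elprimero}-iii) applied with a zero-root factor (using $H = P_0$ from \eqref{H}) gives $h v_{\beta} \in P_{\beta}$.

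The core case for item i) is $[V_{[\alpha]}, V_{[\alpha]}]$ and $V_{[\alpha]} V_{[\alpha]}$. Fix $\beta, \gamma \in [\alpha]$ and consider a non-zero product of elements of $P_{\beta}$ and $P_{\gamma}$. If $\gamma = -\beta$, the result lies in $[P_{\beta}, P_{-\beta}]$ or $P_{\beta} P_{-\beta}$, hence in $I_{H,[\alpha]}$ by definition. If $\gamma = \beta$, Lemma \ref{elprimero}-ii) puts $[P_{\beta}, P_{\beta}]$ in $P_{2\beta} + P_{-\beta}$ and Lemma \ref{elprimero}-iii) puts $P_{\beta} P_{\beta}$ in $P_{2\beta}$; the length-two connection $\{\beta, \beta\}$ yields $\beta \star \beta = 2\beta$, hence $2\beta \in [\beta] = [\alpha]$, while $-\beta \in [\beta]$ via the trivial connection $\{\beta\}$. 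In the remaining case $\gamma \neq \pm \beta$, Lemma \ref{elprimero}-i) or iii) forces $\beta + \gamma \in \Lambda$, and the connection $\{\beta, \gamma\}$ gives $\beta \sim \beta + \gamma$; hence $\beta + \gamma \in [\alpha]$ and the product remains inside $V_{[\alpha]}$.

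For item ii), the $H$-$H$ pieces vanish as before. For $[I_{H,[\alpha]}, V_{[\gamma]}]$, $I_{H,[\alpha]} V_{[\gamma]}$, and the analogous combinations, linearity reduces the argument to expressions $[[P_{\beta}, P_{-\beta}], P_{\delta}]$, $[P_{\beta} P_{-\beta}, P_{\delta}]$, $[P_{\beta}, P_{-\beta}] P_{\delta}$, and $(P_{\beta} P_{-\beta}) P_{\delta}$ with $\beta \in [\alpha]$, $\delta \in [\gamma]$. Whenever any such term is non-zero, $\beta \in \Omega$ and the corresponding clause in the definition of $\theta_{\beta} \star \delta$ fires, giving $\theta_{\beta} \star \delta = \delta$; together with $\beta \star (-\beta) = \theta_{\beta}$, the sequence $\{\beta, -\beta, \delta\}$ is a connection from $\beta$ to $\delta$, so $\beta \sim \delta$, contradicting $[\alpha] \neq [\gamma]$. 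For the pure root pieces $[V_{[\alpha]}, V_{[\gamma]}]$ and $V_{[\alpha]} V_{[\gamma]}$, the subcases $\delta = \pm \beta$ would already force $[\alpha] = [\gamma]$ via the trivial connection $\{\beta\}$ and therefore do not arise; otherwise $\beta + \delta \in \Lambda$ by Lemma \ref{elprimero}-i) or iii), and $\{\beta, \delta, -\beta\}$ is a connection from $\beta$ to $\delta$, since $\beta \star \delta = \beta + \delta \in \Lambda$ and $(\beta + \delta) \star (-\beta) = \delta \in \Lambda$, again a contradiction.

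The main obstacle is not any single deep step but the exhaustive case analysis required to cover every subcase, in particular identifying which of the four clauses defining $\theta_{\beta} \star \delta$ applies in each non-vanishing configuration and selecting the appropriate connection to derive $\beta \sim \delta$. Once this bookkeeping is in place, each verification reduces to a direct application of Lemma \ref{elprimero} together with the definitions of $\star$ and of connection.
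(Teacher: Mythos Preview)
Your proposal is correct and follows essentially the same approach as the paper: decompose $I_{[\alpha]} = I_{H,[\alpha]} \oplus V_{[\alpha]}$, dispose of the $H$--$H$ and mixed pieces using abelianness and Lemma~\ref{elprimero}, and handle the $V$--$V$ pieces by the case split $\delta = -\beta$, $\delta = \beta$, $\delta \neq \pm\beta$, producing in each non-trivial case the same short connections ($\{\beta,\delta\}$, $\{\beta,\beta\}$, $\{\beta\}$, $\{\beta,\delta,-\beta\}$, $\{\beta,-\beta,\delta\}$) that the paper uses. Your bookkeeping is in fact slightly more explicit than the paper's in two places: you note that $\delta = \beta$ in part~ii) is excluded a priori because it would place $\beta$ in $[\alpha]\cap[\gamma]$, and you spell out which clause of the definition of $\theta_\beta \star \delta$ is triggered by each of the four non-vanishing expressions involving $[P_\beta,P_{-\beta}]$ and $P_\beta P_{-\beta}$.
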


\begin{proof}
i) We have
\begin{equation}\label{cero}
\Bigl[I_{H,[\alpha]} \oplus V_{[\alpha]},I_{H,[\alpha]} \oplus
V_{[\alpha]}\Bigr] \subset
\end{equation}
$$\bigl[I_{H,[\alpha]}, I_{H,[\alpha]}\bigr] + \bigl[I_{H,[\alpha]}, V_{[\alpha]}\bigr] + \bigl[V_{[\alpha]},I_{H,[\alpha]}\bigr] + \bigl[V_{[\alpha]},V_{[\alpha]}\bigr].$$
and
\begin{eqnarray}
&& \hspace{1.3cm} \Bigl(I_{H,[\alpha]} \oplus V_{[\alpha]}\Bigr)  \Bigl(I_{H,[\alpha]} \oplus V_{[\alpha]} \Bigr) \subset \nonumber \\
&& I_{H,[\alpha]} I_{H,[\alpha]} + I_{H,[\alpha]} V_{[\alpha]} + V_{[\alpha]}  I_{H,[\alpha]} + V_{[\alpha]} V_{[\alpha]}. \label{cerooo}
\end{eqnarray}
Since $I_{H,[\alpha]} \subset H$ we get
\begin{equation}\label{p1}
\hbox{$\bigl[I_{H,[\alpha]},I_{H,[\alpha]}\bigr] = 0$ and $\bigl[I_{H,[\alpha]},V_{[\alpha]}\bigr] + \bigl[V_{[\alpha]},
I_{H,[\alpha]}\bigr] \subset V_{[\alpha]}$,}
\end{equation}
so we only have to consider the summand $\bigl[V_{[\alpha]}, V_{[\alpha]}\bigr]$ in Equation \eqref{cero}. Given $\beta, \delta \in [\alpha]$ such that $[P_{\beta},P_{\delta}] \neq 0$. Assume that $\beta \neq \delta.$ If $\delta = -\beta$, then clearly $[P_{\beta},P_{\delta}] = [P_{\beta}, P_{-\beta}] \subset I_{H,[\alpha]}.$ Suppose $\delta \neq -\beta$. Taking into account that $[P_{\beta},P_{\delta}] \neq 0$ joint with Lemma \ref{elprimero}-i) ensures $\beta + \delta \in \Lambda$, we have that $\{\beta,\delta\}$ is a connection from $\beta$ to $\beta +\delta$. The transitivity of $\sim$ gives now that $\beta + \delta \in [\alpha]$ and so $[P_{\beta},P_{\delta}] \subset P_{\beta + \delta} \subset V_{[\alpha]}$. Finally, in case $\delta = \beta$ we proceed as above with Lemma \ref{elprimero}-ii) to get either $2\beta \in \Lambda$ or $-\beta \in \Lambda,$ and considering the connections $\{\beta,\beta\}$ or $\{\beta\},$ respectively, we have by transitivity that $2\beta \sim \alpha$ or $-\beta \sim \alpha,$ respectively, and again $[P_{\beta},P_{\beta}] \subset P_{2\beta} + P_{-\beta} \subset V_{[\alpha]}$. Consequently
\begin{equation}\label{eq0.5}
\bigl[V_{[\alpha]},V_{[\alpha]}\bigr] \subset I_{H,[\alpha]} \oplus V_{[\alpha]}.
\end{equation}
From Equations \eqref{cero}, \eqref{p1} and \eqref{eq0.5} we get
$$\bigl[I_{[\alpha]},I_{[\alpha]}\bigr] = \bigl[I_{H,[\alpha]} \oplus V_{[\alpha]}, I_{H,[\alpha]} \oplus V_{[\alpha]}\bigr]\subset I_{[\alpha]}.$$

Let us consider Equation \eqref{cerooo}. 
Since $I_{H,[\alpha]} \subset H$ we get $ I_{H,[\alpha]} I_{H,[\alpha]}  = 0$ (because $H$ is an abelian subalgebra of the MPJ-algebra $P$) and  taking into account Lemma \ref{elprimero}-iii),
\begin{equation}\label{p1a}
\hbox{$   I_{H,[\alpha]} V_{[\alpha]} + V_{[\alpha]}
I_{H,[\alpha]}  \subset V_{[\alpha]}$,}
\end{equation}
so we only have to consider the product $ V_{[\alpha]}  V_{[\alpha]} $ in Equation \eqref{cerooo}.
 Given $\beta, \delta \in [\alpha]$ such that $ P_{\beta} P_{\delta}  \neq 0$. If $\delta = -\beta$, then clearly $ P_{\beta} P_{\delta}  = P_{\beta}  P_{-\beta}  \subset I_{H,[\alpha]}.$ Suppose $\delta \neq -\beta$. Taking into account that $ P_{\beta} P_{\delta}  \neq 0$ joint with Lemma \ref{elprimero}-iii) ensures $\beta + \delta \in \Lambda$, we have that $\{\beta,\delta\}$ is a connection from $\beta$ to $\beta +\delta$. The transitivity of $\sim$ gives now that $\beta + \delta \in [\alpha]$ and so $ P_{\beta} P_{\delta}  \subset P_{\beta + \delta} \subset V_{[\alpha]}$. 
 Consequently
\begin{equation}\label{eq0.5a}
 V_{[\alpha]} V_{[\alpha]}  \subset I_{H,[\alpha]} \oplus V_{[\alpha]}.
\end{equation}
From here, Equations \eqref{cerooo}, \eqref{p1a}  and \eqref{eq0.5a} give us $$I_{[\alpha]}  I_{[\alpha]} = \bigl(I_{H,[\alpha]} \oplus V_{[\alpha]}\bigr)  \bigl(I_{H,[\alpha]} \oplus V_{[\alpha]}\bigr) \subset I_{[\alpha]}.$$

ii) Let $[\alpha] \neq [\gamma].$ Since $\bigl[I_{H,[\alpha]}, I_{H,[\gamma]}\bigr] \subset [H,H] = 0$ and $ I_{H,[\alpha]}  I_{H,[\gamma]}  \subset  H H  = 0$, we have
\begin{equation}\label{cuatro}
\Bigl[I_{H,[\alpha]} \oplus V_{[\alpha]}, I_{H,[\gamma]} \oplus V_{[\gamma]}\Bigr] \subset \bigl[I_{H,[\alpha]},V_{[\gamma]}\bigr]+ \bigl[V_{[\alpha]}, I_{H,[\gamma]} \bigr] + \bigl[V_{[\alpha]}, V_{[\gamma]}\bigr].
\end{equation}
as well
\begin{equation}\label{cuatrooo}
\Bigl(I_{H,[\alpha]} \oplus V_{[\alpha]}\Bigr) \Bigl(I_{H,[\gamma]} \oplus V_{[\gamma]}\Bigr) \subset   I_{H,[\alpha]}  V_{[\gamma]} + V_{[\alpha]}  I_{H,[\gamma]} + V_{[\alpha]}  V_{[\gamma]}.
\end{equation}

Consider the third summand $\bigl[V_{[\alpha]}, V_{[\gamma]}\bigr]$ in Equation \eqref{cuatro}, and suppose there exist $\beta \in [\alpha]$ and $\eta \in [\gamma]$ such that $[P_{\beta}, P_{\eta}] \neq 0$. As necessarily $\beta \neq -\eta$, by Lemma \ref{elprimero}-i) then $\beta+\eta \in \Lambda$. So $\{\beta,\eta,-\beta\}$ is a connection from $\beta$ to $\eta$. By transitivity of the connection relation $\alpha \in [\gamma]$, a contradiction. Hence $[P_{\beta},P_{\eta}] = 0$ and so
\begin{equation}\label{nueve}
\bigl[V_{[\alpha]},V_{[\gamma]}\bigr]=0.
\end{equation}
A similar argument, using now Lemma \ref{elprimero}-iii), can be applied to the third summand $V_{[\alpha]}  V_{[\gamma]}$ in Equation \eqref{cuatrooo} and we also conclude
\begin{equation}\label{nueveee}
V_{[\alpha]}  V_{[\gamma]}=0.
\end{equation}

Consider now the first summand $\bigl[I_{H,[\alpha]}, V_{[\gamma]}\bigr]$ in Equation \eqref{cuatro} and the first one $I_{H,[\alpha]} V_{[\gamma]}$ in Equation \eqref{cuatrooo}, and suppose there exist $\beta \in [\alpha]$ and $\eta \in [\gamma]$ such that
$$\bigl[\bigl[P_{\beta},P_{-\beta}\bigr], P_{\eta}\bigr] + \bigl[P_{\beta}P_{-\beta},P_{\eta}\bigr] + \bigl[P_{\beta},P_{-\beta}\bigr]P_{\eta}+\bigl(P_{\beta}P_{-\beta}\bigr)P_{\eta} \neq 0.$$
Suppose the first summand $\bigl[\bigl[P_{\beta},P_{-\beta}\bigr], P_{\eta}\bigr]$ is non-zero, so $\beta \in \Omega$ and $\theta_{ \beta}\star \eta = \eta$. Then $\{\beta, -\beta, \eta\}$ is a connection from $\beta$ to $\eta$ and $\eta \in [\alpha]$, a contradiction. Hence, $\bigl[\bigl[P_{\beta},P_{-\beta}\bigr], P_{\eta}\bigr] = 0.$  Similarly to the  summands $\bigl[P_{\beta}P_{-\beta},P_{\eta}\bigr] $, $ \bigl[P_{\beta},P_{-\beta}\bigr]P_{\eta} $ and  $\bigl(P_{\beta}P_{-\beta}\bigr)P_{\eta}.$
 From here,
\begin{equation}\label{trece}
\bigl[I_{H,[\alpha]},V_{[\gamma]}\bigr] = I_{H,[\alpha]}  V_{[\gamma]} = 0.
\end{equation}

\noindent In a similar way we get
\begin{equation}\label{treceee}
\bigl[V_{[\alpha]},I_{H,[\gamma]}\bigr] = V_{[\alpha]} I_{H,[\gamma]}=0.
\end{equation}
 Equations (\ref{cuatro})-(\ref{treceee}) show $\bigl[I_{[\alpha]}, I_{[\gamma]}\bigr] = I_{[\alpha]} I_{[\gamma]} = 0$.
\end{proof}

\noindent Let us show that any $I_{[\alpha]}$ is actually an ideal of the MPJ-algebra $P$. Also, we recall that a MPJ-algebra $P$ is {\it simple} if $[P,P] \neq 0, P  P \neq 0$ and its only ideals are $\{0\},P$.

\begin{theorem}\label{teo1}
The following assertions hold.
\begin{enumerate}
\item[{\rm i)}] For any $[\alpha] \in \Lambda/\sim$, the linear
subspace $$I_{[\alpha]} = I_{H,[\alpha]} \oplus V_{[\alpha]}$$ of $P$ associated to $[\alpha]$ is an ideal of $P$.

\item[{\rm ii)}] If $P$ is simple, then there exists a connection from $\alpha$ to $\beta$ for any $\alpha, \beta \in \Lambda$ and $H = \sum\limits_{\alpha \in \Lambda}\bigl([P_{\alpha},P_{-\alpha}] + P_{\alpha}  P_{-\alpha}\bigr)$.
\end{enumerate}
\end{theorem}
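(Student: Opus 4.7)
My plan is to deduce both items essentially from Proposition \ref{pro2}, together with the root-space calculus of Lemma \ref{elprimero} and the fact that $H=P_0$.

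For item (i), I would write the ambient decomposition
$$P = H \oplus V_{[\alpha]} \oplus \Bigl(\bigoplus_{[\gamma] \neq [\alpha]} V_{[\gamma]}\Bigr),$$
and check separately that $I_{[\alpha]}$ absorbs each summand under both products. Proposition \ref{pro2}(i) already gives $[I_{[\alpha]},I_{[\alpha]}]\subset I_{[\alpha]}$ and $I_{[\alpha]}I_{[\alpha]}\subset I_{[\alpha]}$, so absorption by $V_{[\alpha]}$ is free. For absorption by $H$, the piece $I_{H,[\alpha]}\subset H$ commutes trivially with $H$ (bracketwise, since $H$ is abelian as a Malcev subalgebra; and for the Jordan product, since $H$ is assumed to be an abelian subalgebra of the MPJ-algebra $P$). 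For the piece $V_{[\alpha]}$, diagonal action gives $[H,P_\beta]\subset P_\beta$ directly from the definition of the root space, and Lemma \ref{elprimero}(iii) applied with $H=P_0$ gives $H\,P_\beta+P_\beta\,H\subset P_\beta$; hence $[H,V_{[\alpha]}]+H\,V_{[\alpha]}+V_{[\alpha]}\,H\subset V_{[\alpha]}\subset I_{[\alpha]}$. Finally, absorption by $V_{[\gamma]}$ for $[\gamma]\neq[\alpha]$ is the crucial piece: since $V_{[\gamma]}\subset I_{[\gamma]}$, Proposition \ref{pro2}(ii) forces
$$[I_{[\alpha]},V_{[\gamma]}]\subset[I_{[\alpha]},I_{[\gamma]}]=0, \qquad I_{[\alpha]}\,V_{[\gamma]}\subset I_{[\alpha]}\,I_{[\gamma]}=0,$$
so the trivial inclusion $0\subset I_{[\alpha]}$ closes the verification.

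For item (ii), assume $P$ is simple and fix any $\alpha\in\Lambda$. By item (i), $I_{[\alpha]}$ is an ideal of $P$, and it is non-zero because $P_\alpha\subset V_{[\alpha]}\subset I_{[\alpha]}$ with $P_\alpha\neq 0$. Simplicity then gives $I_{[\alpha]}=P$, that is,
$$I_{H,[\alpha]}\oplus V_{[\alpha]}=H\oplus\Bigl(\bigoplus_{\beta\in\Lambda}P_\beta\Bigr).$$
Since $I_{H,[\alpha]}\subset H=P_0$ and $V_{[\alpha]}=\bigoplus_{\beta\in[\alpha]}P_\beta$ is contained in $\bigoplus_{\beta\in\Lambda}P_\beta$, the directness of the root decomposition forces termwise equality: $V_{[\alpha]}=\bigoplus_{\beta\in\Lambda}P_\beta$ and $I_{H,[\alpha]}=H$. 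The first equality shows $[\alpha]=\Lambda$, so for every $\beta\in\Lambda$ there is a connection from $\alpha$ to $\beta$, and by the transitivity of $\sim$ (Proposition \ref{pro1}) one gets connections between any two roots. The second equality is precisely the identity
$$H=\sum_{\beta\in\Lambda}\bigl([P_\beta,P_{-\beta}]+P_\beta\,P_{-\beta}\bigr).$$

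The only step that demands any care is the one where $H$ interacts with $I_{[\alpha]}$ in item (i); one must remember that the abelian character of $H$ as an MPJ-subalgebra is part of the definition of a split MPJ-algebra (not something automatic from $H$ being a MASA of the Malcev part), and that $H\,P_\beta\subset P_\beta$ requires the Leibniz identity through Lemma \ref{elprimero}(iii). After that, the ideal property for external root classes and the simplicity argument in (ii) are essentially a clean reading of Proposition \ref{pro2} and the root decomposition.
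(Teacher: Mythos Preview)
Your proof is correct and follows essentially the same route as the paper: both use Proposition~\ref{pro2} together with the root decomposition and the abelian character of $H$ to verify that $I_{[\alpha]}$ absorbs each summand of $P$ under both products, and both deduce item~(ii) immediately from simplicity forcing $I_{[\alpha]}=P$. Your handling of $I_{H,[\alpha]}\,H$ via $HH=0$ is in fact slightly more direct than the paper's, which first inserts a Leibniz computation for $[P_\beta,P_{-\beta}]\,H$ before arriving at the same conclusion.
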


\begin{proof}
i) Since $\bigl[I_{H,[\alpha]}, H\bigr] \subset [H,H] = 0$ and
$\bigl[V_{[\alpha]}, H\bigr] \subset V_{[\alpha]}$ we get
$\bigl[I_{[\alpha]}, H\bigr] \subset V_{[\alpha]}$. Using Proposition \ref{pro2} we obtain $$\bigl[I_{[\alpha]}, P\bigr] = \bigl[I_{[\alpha]},H \oplus \bigl(\bigoplus\limits_{\beta \in [\alpha]} P_{\beta} \bigr) \oplus \bigl(\bigoplus\limits_{\delta \notin [\alpha]} P_{\delta}\bigr)\bigr] \subset I_{[\alpha]}.$$
Now we need to prove that $I_{[\alpha]} P  \subset I_{[\alpha]}.$
First, we observe that for any $\beta \in [\alpha],$ using Lemma \ref{elprimero}-iii) and since $H$ is abelian, we have
\begin{equation}\label{p10}
(P_{\beta} P_{-\beta}) H  \subset P_0H   = 0.
\end{equation}
By Equation \eqref{Leibniz_identity} we also have
\begin{equation}
\begin{split}
[P_{\beta}, P_{-\beta}]  H &\subset [P_{\beta}, P_{-\beta}  H] + P_{-\beta}  [P_{\beta},H] \subset [P_{\beta}, P_{-\beta}] + P_{ \beta}  P_{-\beta} .\label{p11}
\end{split}
\end{equation}
From Equations \eqref{p10} and \eqref{p11} we get $I_{H,[\alpha]}  H   \subset I_{H,[\alpha]}$ and, taking into account $V_{[\alpha]}  H   \subset V_{[\alpha]}$, we conclude $$I_{[\alpha]} H  \subset I_{[\alpha]}.$$

Similarly, we get $I_{[\alpha]}H \subset V_{[\alpha]}$ since $I_{H,[\alpha]}H \subset HH = 0$ and $V_{[\alpha]}H  \subset V_{[\alpha]}$. Using again Proposition \ref{pro2} we obtain
$$I_{[\alpha]} P = I_{[\alpha]} \Bigl(H \oplus (\bigoplus\limits_{\beta \in [\alpha]}P_{\beta}) \oplus (\bigoplus\limits_{\gamma \notin [\alpha]}P_{\gamma})\Bigr) \subset I_{[\alpha]}.$$
Therefore $I_{[\alpha]}$ is an ideal of $P$.
\medskip

ii) The simplicity of $P $ implies $I_{[\alpha]} = P$. From
here, it is clear that $[\alpha]=\Lambda$ and $H=\sum\limits_{\alpha \in \Lambda} \bigl([P_{\alpha}, P_{-\alpha}] +P_{\alpha}  P_{-\alpha}\bigr)$.
\end{proof}

\begin{theorem}
For a linear complement $U$ of $span_{\mathbb K}\Bigl\{[P_{\alpha}, P_{-\alpha}] + P_{\alpha}P_{-\alpha} : \alpha \in \Lambda\Bigr\}$ in $H$, we have $$P = U + \sum\limits_{[\alpha] \in \Lambda/\sim} I_{[\alpha]},$$ where any $I_{[\alpha]}$ is one of the ideal of $P$ described in Theorem \ref{teo1}-i) satisfying $[I_{[\alpha]},I_{[\gamma]}] = I_{[\alpha]} I_{[\gamma]} = 0$ if $[\alpha] \neq [\gamma]$.
\end{theorem}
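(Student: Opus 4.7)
My plan is to assemble this decomposition directly from the root-space decomposition together with the two preceding results, Proposition~\ref{pro2} and Theorem~\ref{teo1}. The statement is essentially a ``collecting'' theorem, so no new ideas about connections are needed; the only content is bookkeeping of which subspaces lie in which pieces.

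First, I would start from $P = H \oplus \bigl(\bigoplus_{\alpha \in \Lambda} P_\alpha\bigr)$ and treat the Cartan part $H$ and the root part separately. For the root part, since the relation $\sim$ from Proposition~\ref{pro1} is an equivalence relation on $\Lambda$, its equivalence classes partition $\Lambda$, which yields the direct sum
\[
\bigoplus_{\alpha \in \Lambda} P_\alpha \;=\; \bigoplus_{[\alpha] \in \Lambda/\sim} V_{[\alpha]}.
\]
For the Cartan part, I would simply use the definition of $U$: by construction
\[
H \;=\; U \oplus \operatorname{span}_{\mathbb K}\Bigl\{[P_\alpha,P_{-\alpha}] + P_\alpha P_{-\alpha} : \alpha \in \Lambda\Bigr\}
\;=\; U + \sum_{[\alpha] \in \Lambda/\sim} I_{H,[\alpha]},
\]
because grouping the generators of the span by equivalence class reproduces the $I_{H,[\alpha]}$'s by their definition. (This sum over classes is not claimed to be direct, only the outer splitting between $U$ and the rest.)

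Next I would combine the two displays: writing $I_{[\alpha]} = I_{H,[\alpha]} \oplus V_{[\alpha]}$, we obtain
\[
P \;=\; U + \Bigl(\sum_{[\alpha]\in\Lambda/\sim} I_{H,[\alpha]}\Bigr) + \Bigl(\bigoplus_{[\alpha]\in\Lambda/\sim} V_{[\alpha]}\Bigr) \;=\; U + \sum_{[\alpha]\in\Lambda/\sim} I_{[\alpha]},
\]
which is the asserted decomposition. The fact that each $I_{[\alpha]}$ is an ideal of $P$ is exactly Theorem~\ref{teo1}-i), and the orthogonality conditions $[I_{[\alpha]},I_{[\gamma]}] = I_{[\alpha]} I_{[\gamma]} = 0$ for $[\alpha]\neq[\gamma]$ are exactly Proposition~\ref{pro2}-ii). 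So nothing new needs to be verified.

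There is essentially no obstacle here; the only mild subtlety is to be careful that the sum $\sum I_{[\alpha]}$ is written with ``$+$'' rather than ``$\oplus$'', since the Cartan components $I_{H,[\alpha]}$ associated to different classes can in principle overlap (only the root-space parts $V_{[\alpha]}$ are forced into direct sum by the partition of $\Lambda$). That is precisely why the statement introduces the complement $U$ rather than asking for a direct sum decomposition $P = \bigoplus I_{[\alpha]}$.
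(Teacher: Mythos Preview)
Your proposal is correct and follows essentially the same approach as the paper: start from the root-space decomposition $P = H \oplus (\bigoplus_{\alpha\in\Lambda} P_\alpha)$, split $H$ via the complement $U$ and group the remaining pieces by equivalence class into the $I_{[\alpha]}$, then invoke Theorem~\ref{teo1}-i) for the ideal property and Proposition~\ref{pro2}-ii) for the orthogonality. Your write-up is simply more explicit about the bookkeeping than the paper's, and your closing remark about why the sum is written with ``$+$'' rather than ``$\oplus$'' is a correct and helpful observation.
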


\begin{proof}
Clearly $I_{[\alpha]}$ is well-defined and, by Theorem \ref{teo1}-i), an ideal of $P$. Now, by consi\-dering a linear complement $U$ of $span_{\mathbb K}\Bigl\{[P_{\alpha}, P_{-\alpha}] + P_{\alpha}P_{-\alpha} : \alpha \in \Lambda\Bigr\}$ in $H$, we have $$P = H \oplus (\bigoplus\limits_{\alpha \in \Lambda}P_{\alpha}) = U + \sum\limits_{[\alpha ] \in \Lambda/\sim}I_{[\alpha]}.$$
Finally from Proposition \ref{pro2}-ii), it follows that $[I_{[\alpha]}, I_{[\gamma]}] = I_{[\alpha]} I_{[\gamma]} = 0$ if $[\alpha] \neq [\gamma].$
\end{proof}

\medskip

\begin{definition}\rm
We call the {\it center} of a split Malcev-Poisson-Jordan algebra $P$ to the set $${\mathcal Z}(P) := \Bigl\{v \in P : [v,P] = vP =  0\Bigr\}.$$
\end{definition}

\begin{corollary}\label{coro1}
If ${\mathcal Z}(P)=0$ and $H = \sum_{\alpha \in \Lambda}\Bigl([P_{\alpha},P_{-\alpha}] + P_{\alpha}P_{-\alpha}\Bigr),$ then $P$ is the direct sum of the ideals given in Theorem \ref{teo1}-i), $$P = \bigoplus\limits_{[\alpha] \in \Lambda/\sim} I_{[\alpha]},$$ being $\bigl[I_{[\alpha]},I_{[\gamma]}\bigr] = I_{[\alpha]} I_{[\gamma]} = 0$ if $[\alpha] \neq [\gamma].$
\end{corollary}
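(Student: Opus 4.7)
The plan is to derive this as a direct consequence of the preceding theorem, using the two new hypotheses to (i) make the complementary subspace $U$ vanish and (ii) upgrade the plain sum to a direct sum.

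First I would invoke the previous theorem to obtain the decomposition $P = U + \sum_{[\alpha] \in \Lambda/\sim} I_{[\alpha]}$, where $U$ is a linear complement in $H$ of $\operatorname{span}_{\mathbb{K}}\{[P_{\alpha}, P_{-\alpha}] + P_{\alpha} P_{-\alpha} : \alpha \in \Lambda\}$. The hypothesis $H = \sum_{\alpha \in \Lambda}([P_{\alpha},P_{-\alpha}] + P_{\alpha}P_{-\alpha})$ means the spanned subspace is already all of $H$, hence we can take $U = 0$, and so $P = \sum_{[\alpha] \in \Lambda/\sim} I_{[\alpha]}$. The orthogonality relations $[I_{[\alpha]}, I_{[\gamma]}] = I_{[\alpha]} I_{[\gamma]} = 0$ for $[\alpha] \neq [\gamma]$ are inherited directly from Proposition \ref{pro2}-ii).

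The main step, and the only delicate one, is to argue that this sum is direct. I would pick any $[\alpha] \in \Lambda/\sim$ and consider an element
$$v \in I_{[\alpha]} \cap \Bigl(\sum_{[\gamma] \neq [\alpha]} I_{[\gamma]}\Bigr).$$
Viewing $v$ as lying in $I_{[\alpha]}$, Proposition \ref{pro2}-ii) gives $[v, I_{[\gamma]}] = v\, I_{[\gamma]} = 0$ for every $[\gamma] \neq [\alpha]$. Viewing the same $v$ as lying in the complementary sum $\sum_{[\gamma] \neq [\alpha]} I_{[\gamma]}$, and applying Proposition \ref{pro2}-ii) componentwise, we also obtain $[v, I_{[\alpha]}] = v\, I_{[\alpha]} = 0$. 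Combining both, $v$ satisfies $[v, I_{[\gamma]}] = v\, I_{[\gamma]} = 0$ for every class $[\gamma] \in \Lambda/\sim$.

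Since $P = \sum_{[\gamma] \in \Lambda/\sim} I_{[\gamma]}$ by the step above, bilinearity gives $[v, P] = v\, P = 0$, that is, $v \in \mathcal{Z}(P)$. The hypothesis $\mathcal{Z}(P) = 0$ then forces $v = 0$, proving that the sum is direct. I expect the main obstacle to be precisely this directness argument, because one has to be careful that the identity $P = \sum I_{[\gamma]}$ (with no leftover $U$) is needed to conclude $[v,P] = vP = 0$ from the pairwise vanishing products; without the assumption that $U = 0$, the element $v$ could fail to annihilate a piece of $H$ outside the ideals and the reduction to the center would not close.
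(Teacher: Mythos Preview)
Your proposal is correct and follows essentially the same route as the paper's own proof: first use the hypothesis on $H$ to obtain $P = \sum_{[\alpha]} I_{[\alpha]}$ (the paper states this directly rather than via the previous theorem with $U=0$, but the content is identical), and then, for directness, pick $v$ in the intersection, use the pairwise orthogonality from Proposition~\ref{pro2}-ii) from both sides to see that $v$ annihilates every $I_{[\gamma]}$, hence all of $P$, and conclude $v\in\mathcal{Z}(P)=0$. Your closing remark correctly pinpoints why the hypothesis $U=0$ is genuinely needed to close the argument.
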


\begin{proof}
From $H = \sum_{\alpha \in \Lambda}\Bigl([P_{\alpha},P_{-\alpha}] + P_{\alpha}P_{-\alpha}\Bigr)$ it is clear that $P = \sum_{[\alpha] \in \Lambda /\sim} I_{[\alpha]}$. In order to verify the direct character of the sum, consider $$v \in I_{[\alpha]} \cap \bigl(\sum\limits_{[\gamma] \in \Lambda
/\sim, [\gamma] \neq [\alpha]} I_{[\gamma]}\bigr).$$ Since $v
\in I_{[\alpha]}$, the fact $\bigl[I_{[\alpha]}, I_{[\gamma]}\bigr] = I_{[\alpha]} I_{[\gamma]} = 0$ in case $[\alpha] \neq [\gamma]$ gives us
\begin{equation}\label{center1}
\bigl[v, \sum\limits_{[\gamma] \in \Lambda/\sim, [\gamma] \neq
[\alpha]} I_{[\gamma]} \bigr] = v \bigl(\sum\limits_{[\gamma] \in \Lambda/\sim, [\gamma] \neq [\alpha]} I_{[\gamma]} \bigr) =0.
\end{equation}
Taking now into account $v \in \sum\limits_{[\gamma] \in \Lambda /\sim, [\gamma] \neq [\alpha]} I_{[\gamma]}$, the same above fact implies
\begin{equation}\label{center2}
\bigl[v,I_{[\alpha]}\bigr] = v  I_{[\alpha]} =0.
\end{equation}
From Equations \eqref{center1} and \eqref{center2} we get $v \in {\mathcal Z}(P)=0$. From here, $P = \bigoplus\limits_{[\alpha] \in \Lambda/\sim} I_{[\alpha]}$.
\end{proof}

\section{The simple components}

In this section we are interested in studying under which
conditions a split Malcev-Poisson-Jordan algebra decomposes as the direct sum of the family of its simple ideals. From now on, we assume $\Lambda$ is symmetric (in the sense $\alpha \in \Lambda$ implies $-\alpha \in \Lambda$) and ${\rm char}({\mathbb K})=0$.

\begin{lemma}\label{lema5}
Let $P = H \oplus (\bigoplus_{\alpha \in \Lambda} P_{\alpha})$ be a split Malcev-Poisson-Jordan algebra. If $I$ is an ideal of $P$ then $I = (I \cap H ) \oplus \bigl(\bigoplus_{\alpha \in \Lambda} (I \cap P_{\alpha})\bigr).$
\end{lemma}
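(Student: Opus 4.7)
The plan is to take an arbitrary $x \in I$ and show that each of its root-homogeneous summands already lies in $I$. Write $x = h + v_{\alpha_1} + \cdots + v_{\alpha_n}$ with $h \in H$, pairwise distinct $\alpha_i \in \Lambda$, and $v_{\alpha_i} \in P_{\alpha_i}$; the task is then to prove that $h \in I \cap H$ and each $v_{\alpha_i} \in I \cap P_{\alpha_i}$. Because $I$ is an ideal of $P$, we have $[h',x] \in I$ for every $h' \in H$, and by induction all iterates $\mathrm{ad}_{h'}^{k}(x) := [h',[h',\ldots,[h',x]\ldots]] \in I$ for every $k \geq 1$.

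The key observation is that, since $H$ is abelian with respect to the Malcev bracket, $[h',h]=0$, while $[h',v_{\alpha_i}] = \alpha_i(h')v_{\alpha_i}$, so by bilinearity $\mathrm{ad}_{h'}^{k}(x) = \sum_{i=1}^{n} \alpha_i(h')^{k} v_{\alpha_i}$ for every $k \geq 1$. I would then choose a single $h' \in H$ such that the scalars $c_i := \alpha_i(h')$ are non-zero and pairwise distinct. This is possible because $\mathrm{char}(\mathbb{K}) = 0$ makes $\mathbb{K}$ infinite, while the bad locus $\bigcup_{i}\ker \alpha_i \,\cup\, \bigcup_{i<j}\ker(\alpha_i - \alpha_j)$ is a finite union of proper subspaces of $H$ (proper because each $\alpha_i \in \Lambda$ is non-zero and the $\alpha_i$ are pairwise distinct).

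For such an $h'$, the $n$ inclusions $\sum_i c_i^{k} v_{\alpha_i} \in I$ with $k = 1,\ldots,n$ form a linear system whose coefficient matrix has determinant $c_1 \cdots c_n \prod_{i<j}(c_j - c_i) \neq 0$. Inverting this Vandermonde-type matrix expresses each $v_{\alpha_i}$ as a $\mathbb{K}$-linear combination of elements of $I$, whence $v_{\alpha_i} \in I \cap P_{\alpha_i}$; subtracting yields $h = x - \sum_i v_{\alpha_i} \in I \cap H$. The decomposition $I = (I \cap H) \oplus \bigl(\bigoplus_{\alpha \in \Lambda}(I \cap P_\alpha)\bigr)$ follows, and the sum is direct because it is so in $P$ via the original root decomposition.

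The principal obstacle is guaranteeing the existence of a separating $h' \in H$, which is precisely where the characteristic-zero assumption (hence infiniteness of $\mathbb{K}$) enters. The Jordan product plays no role whatsoever in the argument, since the $P_\alpha$-grading is already detected entirely by the Malcev action of $H$; this is why the standard split-algebras argument carries over unchanged from the Malcev setting.
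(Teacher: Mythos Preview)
Your argument is correct and is really a hands-on unpacking of the same idea the paper uses: the paper simply observes that $Ad=\{\mathrm{ad}(h):h\in H\}$ is a commuting family of diagonalizable endomorphisms under which $I$ is invariant, and invokes the standard fact that an invariant subspace then inherits the simultaneous eigenspace decomposition.

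The only tactical difference is that you reduce to a \emph{single} separating $h'\in H$ and run a Vandermonde computation, which is why you need $\mathbb{K}$ infinite (supplied by $\mathrm{char}(\mathbb{K})=0$, in force in this section). The paper's formulation, using the whole family $Ad$, avoids this: one can project onto each eigenspace of a given $\mathrm{ad}(h)$ via Lagrange interpolation in $\mathrm{ad}(h)$ over any field, and then iterate over different $h$ to separate all roots. So your version is more explicit but slightly less portable; the paper's is terser and field-independent. Your closing remark that the Jordan product plays no role is exactly right and matches the paper's viewpoint.
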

\begin{proof}
Consider the set $Ad := \{{\rm ad}(h): h \in H\},$ being ${\rm ad}(h)(x) := [h,x]$ for any $x \in P$. The result follows from the fact that $Ad$ is a commuting set of diagonalizable endomorphisms, and $I$ is invariant under this set.
\end{proof}

\begin{lemma}\label{lema4}
Let $P = H \oplus (\bigoplus_{\alpha \in \Lambda} P_{\alpha})$ be a split Malcev-Poisson-Jordan algebra with ${\mathcal Z}(P)=0$. If $I$ is an ideal of $P$ such that $I \subset H$ then $I=\{0\}$.
\end{lemma}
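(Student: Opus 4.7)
The strategy is to show that every element of $I$ lies in $\mathcal{Z}(P)$; since by hypothesis $\mathcal{Z}(P)=0$, this forces $I=\{0\}$. Fix an arbitrary $v \in I$; by hypothesis $v \in H$. The easy half of the verification is that $H$ is, by the very definition of a split MPJ-algebra, an abelian subalgebra with respect to both products, hence $[v,H]=0$ and $vH=0$ without further work.

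It remains to control the action of $v$ on each root space $P_\alpha$ with $\alpha\in\Lambda$. For any $v_\alpha\in P_\alpha$ the fact that $I$ is an ideal of $P$ gives $[v,v_\alpha]\in I\subset H$; on the other hand, since $v\in H$, the defining property of $P_\alpha$ yields $[v,v_\alpha]=\alpha(v)v_\alpha\in P_\alpha$. The directness of the root decomposition $P=H\oplus\bigl(\bigoplus_{\alpha\in\Lambda}P_\alpha\bigr)$ together with $\alpha\neq 0$ forces $H\cap P_\alpha=0$, so $[v,v_\alpha]=0$.

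The Jordan product is handled in an entirely parallel way, with Lemma \ref{elprimero}-iii) playing the role of the eigenvalue identity: $v\cdot v_\alpha\in I\subset H$, while $v\in H=P_0$ together with Lemma \ref{elprimero}-iii) places $v\cdot v_\alpha$ inside $P_{0+\alpha}=P_\alpha$ whenever it is nonzero, so again $v\cdot v_\alpha\in H\cap P_\alpha=0$. Assembling the three cases (action on $H$, on each $P_\alpha$ via $[\cdot,\cdot]$, and on each $P_\alpha$ via the Jordan product) gives $[v,P]=vP=0$, hence $v\in\mathcal{Z}(P)=\{0\}$ and so $I=\{0\}$.

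There is no real obstacle: the argument is a direct unwinding of definitions. The only point that deserves attention is that the hypothesis on $H$ being abelian with respect to \emph{both} products — not just the Malcev bracket — is exactly what is needed to eliminate the $H$-component of the action of $v$; the remaining components are then killed by the standard observation that nonzero root spaces intersect $H$ trivially.
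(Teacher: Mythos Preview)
Your proof is correct and follows essentially the same route as the paper's: both use that $H$ is abelian for the two products to kill the $H$-component, then combine the ideal containment $I\subset H$ with Lemma~\ref{elprimero} (or, for the bracket, the eigenvalue identity directly) to force the action on each $P_\alpha$ into $H\cap P_\alpha=0$, concluding $I\subset\mathcal{Z}(P)=0$. The only cosmetic difference is that you argue elementwise while the paper works at the level of subspaces.
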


\begin{proof}
Suppose that $I$ is an ideal of $P$ such that $I \subset H$. Since  $H$ is abelian
we have $ [I,H] \subset [H,H] = 0 $ and  $IH \subset HH = 0$. The fact $$\bigl[I,\bigoplus\limits_{\alpha \in \Lambda} P_{\alpha}\bigr] + I  \bigl(\bigoplus\limits_{\alpha\in \Lambda} P_{\alpha}\bigr)   \subset I \subset H$$ joint with Lemma \ref{elprimero} implies
\begin{equation*}
\bigl[I, \bigoplus\limits_{\alpha \in \Lambda} P_{\alpha} \bigr]  + I  \bigl(\bigoplus\limits_{\alpha \in \Lambda} P_{\alpha} \bigr)  \subset H \cap \bigl(\bigoplus\limits_{\alpha \in \Lambda} P_{\alpha} \bigr)= 0,
\end{equation*}
thus $\bigl[I,  P  \bigr]= 0$ and  $IP = 0$. Therefore, we have showed $I \subset \mathcal{Z}(P)=0$.
\end{proof}

\noindent Let us introduce the concepts of root-multiplicativity and maximal length in the framework of split Malcev-Poisson-Jordan algebras, in a similar way to the ones for split Lie algebras, split Malcev algebras, split Poisson algebras and split Lie-Rinehart algebras (see \cite{YoLieRinehart, super, YoPoisson, YoMalcev} for these notions and examples).

\begin{definition}\rm
We say that a split Malcev-Poisson-Jordan algebra $P$ is {\it root-multiplicative} if given $\alpha, \beta  \in \Lambda$ such that either $\alpha \star \beta \in \Lambda$ or $\theta_{\alpha} \star \beta \in \Lambda$, then either $[P_{\alpha},P_{\beta}] \neq 0$ and $P_{\alpha}P_{\beta} \neq 0$, or $[[P_{\alpha},P_{-\alpha}], P_{\beta}] \neq 0$, respectively.
\end{definition}

\begin{definition}\rm
A split Malcev-Poisson-Jordan algebra $P$ is of {\it maximal length} if $\dim P_{\alpha} =1$ for any $\alpha \in \Lambda$.
\end{definition}

We would like to note that the above concepts appear in a natural way in the study of split Malcev-Poisson-Jordan algebras. For instance, any Poisson structure associate to a semisimple finite dimensional Lie algebra gives rise to a root-multiplicative and of maximal length split Poisson algebra. We also have, in the infinite-dimensional setting, that any Poisson structure ${\mathcal P}$ defined either on a semisimple separable $L^*$-algebra \cite{Schue1, Schue2} or on a semisimple locally finite split Lie algebra \cite{Stumme} necessarily makes $\mathcal{P}$ a root-multiplicative and of maximal length split Poisson algebra.



\begin{theorem}\label{teo3}
Let $P$ be a root-multiplicative split Malcev-Poisson-Jordan algebra of ma\-ximal length. It holds that $P$ is simple if and only if it has all its non-zero roots connected and $H = \sum_{\alpha \in \Lambda}\Bigl([P_{\alpha}, P_{-\alpha}] + P_{\alpha} P_{-\alpha}\Bigr)$.
\end{theorem}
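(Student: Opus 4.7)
The plan is to handle the two implications separately. The forward direction is essentially immediate from Theorem~\ref{teo1}-ii), which already shows that simplicity forces all nonzero roots to be pairwise connected and forces $H=\sum_{\alpha\in\Lambda}([P_\alpha,P_{-\alpha}]+P_\alpha P_{-\alpha})$; I would just invoke that result and move on.

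For the converse, I would take an arbitrary nonzero ideal $I\subset P$ and aim to show $I=P$. First, Lemma~\ref{lema5} gives $I=(I\cap H)\oplus\bigoplus_{\alpha\in\Lambda}(I\cap P_\alpha)$, and maximal length forces each $I\cap P_\alpha$ to be either $0$ or $P_\alpha$. The opening move is to locate at least one root $\alpha_0\in\Lambda$ with $P_{\alpha_0}\subset I$. If no such $\alpha_0$ exists, then $I\subset H$, and for any $h\in I$ the inclusions $[h,P_\alpha]\subset I\cap P_\alpha=0$ and $hP_\alpha\subset I\cap P_\alpha=0$, together with $H$ being abelian, give $h\in\mathcal{Z}(P)$. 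I would then verify $\mathcal{Z}(P)=0$ under the present hypotheses (connectedness plus root-multiplicativity rules out any $P_\alpha\subset\mathcal{Z}(P)$, since any connection out of $\alpha$ forces a nonzero product involving $P_\alpha$; a short Leibniz computation combined with $H=\sum[\cdots]$ eliminates $\mathcal{Z}(P)\cap H$), and conclude via Lemma~\ref{lema4} that $I=\{0\}$, contradicting the assumption.

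The core step is then to propagate $P_{\alpha_0}\subset I$ to $P_\beta\subset I$ for every $\beta\in\Lambda$. Fix $\beta$ and take a connection $\alpha_1=\alpha_0,\alpha_2,\ldots,\alpha_n$ whose partial $\star$-outputs lie in $\pm\Lambda\cup\Theta_{\Omega}$ and whose final output is $\pm\beta$. I would walk along the chain: when the current partial output is a root $\gamma$ and $\gamma\star\alpha_{i+1}=\gamma+\alpha_{i+1}\in\Lambda$, root-multiplicativity together with Lemma~\ref{elprimero}-i) and maximal length gives $[P_\gamma,P_{\alpha_{i+1}}]=P_{\gamma+\alpha_{i+1}}$ (a one-dimensional subspace), which sits inside $I$ since $I$ is an ideal; when the current state is some $\theta_\gamma$ and $\theta_\gamma\star\alpha_{i+1}\in\Lambda$, the definition of $\Theta_\Omega$ together with root-multiplicativity forces the double-bracket $[[P_\gamma,P_{-\gamma}],P_{\alpha_{i+1}}]$ (or the corresponding Jordan-product variant) to be nonzero, again filling the one-dimensional target. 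Iterating yields $P_\beta\subset I$, and symmetry of $\Lambda$ together with Equation~\eqref{equmenos1} handles the signs.

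Once $\bigoplus_{\alpha\in\Lambda}P_\alpha\subset I$, the hypothesis $H=\sum_{\alpha\in\Lambda}([P_\alpha,P_{-\alpha}]+P_\alpha P_{-\alpha})$ instantly gives $H\subset I$, so $I=P$. The conditions $[P,P]\neq 0$ and $P\cdot P\neq 0$ required by the definition of simplicity follow from the same decomposition of $H$ combined with root-multiplicativity, which produces at least one nonzero Malcev bracket and at least one nonzero Jordan product between some $P_\alpha$ and $P_{-\alpha}$. The parts I expect to be delicate are (a) ruling out $I\subset H$, which demands establishing $\mathcal{Z}(P)=0$ from the given hypotheses and where the interplay between the Malcev and Jordan structures has to be exploited through the Leibniz identity; and (b) the chain-propagation step, where the alternation between the $\pm\Lambda$ states and the $\Theta_{\Omega}$ states, and between the Malcev-product and Jordan-product alternatives hidden in the definition of $\Theta_{\Omega}$, must be tracked with care at every link of the connection.
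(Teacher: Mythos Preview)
Your overall strategy matches the paper's, but there is a real gap in the propagation step. A connection from $\alpha_0$ to $\beta$ terminates in $\{\beta,-\beta\}$, so walking along it only yields $P_{\epsilon\beta}\subset I$ for some $\epsilon\in\{1,-1\}$, not $P_\beta\subset I$ as you assert. Your appeal to ``symmetry of $\Lambda$ together with Equation~\eqref{equmenos1}'' does not close this: Equation~\eqref{equmenos1} only concerns membership in $\Omega$, and symmetry of $\Lambda$ provides no mechanism to pass from $P_{-\beta}\subset I$ to $P_\beta\subset I$. Consequently your next line, ``once $\bigoplus_{\alpha\in\Lambda}P_\alpha\subset I$ \ldots'', is not yet available.

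The paper repairs this by reversing the order of your last two steps. From the weaker conclusion that for each $\beta$ one has $P_\beta\subset I$ \emph{or} $P_{-\beta}\subset I$, the ideal property already gives $[P_\beta,P_{-\beta}]+P_\beta P_{-\beta}\subset I$; summing over $\beta$ and using the hypothesis $H=\sum_{\alpha}\bigl([P_\alpha,P_{-\alpha}]+P_\alpha P_{-\alpha}\bigr)$ yields $H\subset I$ \emph{first}. Only then does one obtain $P_\alpha=[H,P_\alpha]\subset I$ for every $\alpha\in\Lambda$. So the route to the full root-space inclusion passes through $H$, not the other way around. The paper also singles out $\beta=\pm 2\alpha_0$ and handles it via the Jordan square $P_{2\alpha_0}=P_{\alpha_0}P_{\alpha_0}\subset I$, in order to avoid the two-term target $P_{2\gamma}+P_{-\gamma}$ of Lemma~\ref{elprimero}-ii) when a bracket $[P_\gamma,P_\gamma]$ could appear along the chain; your write-up should flag this wrinkle too. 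Your intention to verify $\mathcal Z(P)=0$ explicitly before invoking Lemma~\ref{lema4} is, incidentally, more careful than the paper, which simply cites Lemmas~\ref{lema5} and~\ref{lema4} together to secure $\Lambda_I\neq\emptyset$.
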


\begin{proof}
If $P$ is simple, the necessary condition was proved in Theorem \ref{teo1}-ii). To prove the converse, consider $I$ a non-zero ideal of $P$. By Lemmas \ref{lema5} and \ref{lema4} we can write $$I = (I \cap H) \oplus (\bigoplus_{\alpha \in \Lambda_I} (I \cap P_{\alpha}))$$ with $\Lambda_I := \{\alpha \in \Lambda : I \cap P_{\alpha} \neq 0\}$ and $\Lambda_I \neq \emptyset$. By maximal length we also can write $\Lambda_I = \{\alpha \in \Lambda : P_{\alpha} \subset I\}.$

Let us show that $H \subset I$. We take $\alpha_0 \in \Lambda_I,$ so $P_{\alpha_0} \subset I$. Now, for any $\beta \in \Lambda \setminus \{2\alpha_0,-2\alpha_0\}$, the fact that $\alpha_0$ and $\beta$ are connected gives us a connection $\{\gamma_1,\dots,\gamma_r\}$ from $\alpha_0$ to $\beta$ such that $$\gamma_1 = \alpha_0, \gamma_1 \star \gamma_2,(\gamma_1 \star \gamma_2) \star \gamma_3,\dots, (\cdots((\gamma_1 \star \gamma_2) \star \gamma_3)\star \cdots) \star \gamma_{r-1} \in   \Lambda \cup \Theta_{\Omega}$$
$$(\cdots((\gamma_1 \star \gamma_2) \star \gamma_3) \star \cdots) \star \gamma_r \in \{\beta,-\beta\},$$ and by maximal length and the root-multiplicativity of $P$ we get $$[[\dots[[P_{\alpha_0},P_{\gamma_2}],P_{\gamma_3}],\dots],P_{\gamma_r}] = P_{\epsilon \beta}, \mbox{ with} \hspace{0.2cm} \epsilon \in \{1,-1\}.$$ From here, we deduce that either $P_{\beta} \subset I$ or $P_{-\beta} \subset I.$ In both cases $[P_{\beta},P_{-\beta}] \subset I$ and $P_{\beta}P_{-\beta} \subset I.$ From here, the fact $H = \sum_{\beta \in \Lambda} ([P_{\beta},P_{-\beta}] + P_{\beta}P_{-\beta})$ implies $H \setminus \Bigl\{ [P_{2\alpha_0},P_{-2\alpha_0}] + P_{2\alpha_0}P_{-2\alpha_0} \Bigr\} \subset I$. Since $\alpha_0 \in \Lambda_I,$ by maximal length and Lemma \ref{elprimero}-iii) we get $P_{2\alpha_0} = P_{\alpha_0}P_{\alpha_0} \subset I,$ and consequently we can assert
\begin{equation}\label{eq7}
H \subset I.
\end{equation}
Given now any $\alpha \in \Lambda$, the fact $\alpha \neq 0$ and Equation \eqref{eq7} show $P_{\alpha}=[H,P_{\alpha}]  \subset I$. We conclude $I=P$ and therefore $P$ is simple as required.
\end{proof}

\begin{theorem}
Let $P$ be a root-multiplicative split Malcev-Poisson-Jordan algebra of maxi\-mal length satisfying $\mathcal{Z}(P)=0$ and $H = \sum\limits_{\alpha \in \Lambda}\Bigl([P_{\alpha},P_{-\alpha}] + P_{\alpha}  P_{-\alpha}\Bigr)$. Then $$P =\bigoplus\limits_{[\alpha] \in \Lambda/\sim} I_{[\alpha]},$$ where any $I_{[\alpha]}$ is a simple ideal having its root system $\Lambda_{I_{[\alpha]}}$  with all of its roots connected and satisfying $[I_{[\alpha]},I_{[\gamma]}] = I_{[\alpha]} I_{[\gamma]} = 0$ if $[\alpha] \neq [\gamma]$.
\end{theorem}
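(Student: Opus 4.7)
The plan is to derive the direct sum decomposition immediately from Corollary \ref{coro1} and then to prove that each component $I_{[\alpha]}$ is simple by adapting the proof of Theorem \ref{teo3}. The hypotheses on $\mathcal{Z}(P)$ and $H$ are exactly those of Corollary \ref{coro1}, so one obtains $P = \bigoplus_{[\alpha] \in \Lambda/\sim} I_{[\alpha]}$ together with the orthogonality $[I_{[\alpha]}, I_{[\gamma]}] = I_{[\alpha]} I_{[\gamma]} = 0$ for $[\alpha] \neq [\gamma]$. By construction the relevant root set of $I_{[\alpha]}$ is the equivalence class $[\alpha]$, all of whose members are pairwise connected.

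To establish simplicity of $I_{[\alpha]}$, let $J \neq 0$ be an ideal of $I_{[\alpha]}$. Because $P = \bigoplus I_{[\gamma]}$ and orthogonality gives $[J, I_{[\gamma]}] + J I_{[\gamma]} = 0$ for $[\gamma] \neq [\alpha]$, while $[J, I_{[\alpha]}] + J I_{[\alpha]} \subset J$, the subspace $J$ is automatically an ideal of $P$. Lemma \ref{lema5} then yields $J = (J \cap H) \oplus \bigoplus_{\beta \in \Lambda_J}(J \cap P_\beta)$; Lemma \ref{lema4} combined with $\mathcal{Z}(P) = 0$ forces $\Lambda_J \neq \emptyset$; clearly $\Lambda_J \subset [\alpha]$, and maximal length upgrades each $J \cap P_\beta$ to $P_\beta$. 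Fix $\alpha_0 \in \Lambda_J$. For every $\beta \in [\alpha]$ take a connection $\{\gamma_1 = \alpha_0, \ldots, \gamma_r\}$ from $\alpha_0$ to $\beta$; a short induction using $\Lambda = -\Lambda$ and the transitivity of $\sim$ places each $\gamma_i$ inside $[\alpha]$. Running the iteration of Malcev brackets along the connection (and, at a $\theta$-step, substituting with the associated subspaces $[P_\eta, P_{-\eta}]$ or $P_\eta P_{-\eta}$) exactly as in the proof of Theorem \ref{teo3}, root-multiplicativity and maximal length force $P_{\epsilon \beta} \subset J$ for some $\epsilon \in \{\pm 1\}$. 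Since $J$ is an ideal of $P$, this yields $[P_\beta, P_{-\beta}] + P_\beta P_{-\beta} \subset J$ for every $\beta \in [\alpha]$, and hence $I_{H, [\alpha]} \subset J$.

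To deduce $V_{[\alpha]} \subset J$ I will use that $\beta|_{I_{H,[\alpha]}} \neq 0$ for every $\beta \in [\alpha]$: by orthogonality $\beta$ vanishes on $I_{H,[\gamma]}$ for $[\gamma] \neq [\alpha]$, so combining $H = \bigoplus_{[\gamma]} I_{H, [\gamma]}$ with $\beta \neq 0$ on $H$ forces $\beta$ to be non-trivial on $I_{H,[\alpha]}$. Therefore $P_\beta = [I_{H,[\alpha]}, P_\beta] \subset [J, P] \subset J$, so $V_{[\alpha]} \subset J$ and $J = I_{[\alpha]}$. Finally, the non-vanishings $[I_{[\alpha]}, I_{[\alpha]}] \neq 0$ and $I_{[\alpha]} I_{[\alpha]} \neq 0$ required by the definition of simple follow from $I_{H, [\alpha]} \neq 0$ (otherwise $I_{[\alpha]} \subset \mathcal{Z}(P) = 0$ by orthogonality) together with the now-established non-trivial actions of $I_{H, [\alpha]}$ on $V_{[\alpha]}$, and, where needed, by invoking root-multiplicativity at a pair $(\beta_1, \beta_2) \in [\alpha]^2$ with $\beta_1 + \beta_2 \in \Lambda$ and $\beta_2 \neq -\beta_1$, which gives $P_{\beta_1} P_{\beta_2} \neq 0$.

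The main obstacle will be the careful bookkeeping of the iteration along a connection that passes through $\theta$-steps: at every such step one must verify that the substituted subspaces ($[P_\eta, P_{-\eta}]$, $P_\eta P_{-\eta}$, $[P_\eta P_{-\eta}, \cdot]$, $[P_\eta, P_{-\eta}] \cdot$) all lie inside $I_{[\alpha]}$, so that $J$ genuinely absorbs them as an ideal of $P$. A secondary subtlety is the separate verification of $I_{[\alpha]} I_{[\alpha]} \neq 0$, which is less formal than its Lie counterpart and appears to require exploiting root-multiplicativity together with the generating assumption on $H$ in order to rule out pathological classes.
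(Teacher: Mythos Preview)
Your proof is correct and close in spirit to the paper's, but the overall architecture differs. The paper proceeds more modularly: after invoking Corollary~\ref{coro1}, it simply checks that each $I_{[\alpha]}$ is itself a split MPJ-algebra (with MASA $I_{H,[\alpha]}$ and root system $[\alpha]$) that inherits root-multiplicativity, maximal length, the generating condition on its abelian part, and trivial center (the latter from $\mathcal{Z}(P)=0$ together with orthogonality), and then applies Theorem~\ref{teo3} as a black box. Your route instead lifts a nonzero ideal $J$ of $I_{[\alpha]}$ to an ideal of $P$ via the orthogonality relations and re-runs the connection argument of Theorem~\ref{teo3} inside $P$. Both strategies reach the goal; the paper's saves work by not reproving simplicity from scratch, while yours has the advantage that Lemmas~\ref{lema5} and~\ref{lema4} are invoked in $P$ rather than in $I_{[\alpha]}$, so no separate verification that $I_{[\alpha]}$ is split is needed. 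One remark: once you know $J$ is an ideal of $P$, the induction forcing each $\gamma_i$ of the connection into $[\alpha]$ is unnecessary---bracketing $J$ with any $P_{\gamma_i}\subset P$ already lands in $J$, regardless of whether $\gamma_i\in[\alpha]$. Your residual worry about $I_{[\alpha]}I_{[\alpha]}\neq 0$ is legitimate, but the paper's proof of Theorem~\ref{teo3} is equally silent on this point, so you are at the same level of rigor.
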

\black

\begin{proof}
By Corollary \ref{coro1}, $$P = \bigoplus\limits_{[\alpha] \in \Lambda/\sim} I_{[\alpha]}$$ is the direct sum of the ideals
$$I_{[\alpha]} = I_{H,[\alpha]} \oplus V_{[\alpha]},$$ where $$I_{H,[\alpha]} = span_{\mathbb K} \Bigl\{[P_{\beta},P_{-\beta}] + P_{\beta}P_{-\beta} : \beta \in [\alpha]\Bigr\},$$ having any $I_{[\alpha]}$ as root system $\Lambda_{I_{[\alpha]}} = [\alpha].$ In order to apply Theorem \ref{teo3} to each $I_{[\alpha]}$, observe that the fact $\Lambda_{I_{[\alpha]}} = [\alpha]$ gives us easily that $\Lambda_{I_{[\alpha]}}$ has all of its elements connected through connections contained in $\Lambda_{I_{[\alpha]}}$. We also have that any $I_{[\alpha]}$ is root-multiplicative as consequence of the root-multiplicativity of $P$. Clearly $I_{[\alpha]}$ is of maximal length because $P$ is itself of maximal length, and finally ${\mathcal Z}_{I_{[\alpha]}}(I_{[\alpha]}) = 0$ (where ${\mathcal Z}_{I_{[\alpha]}}(I_{[\alpha]})$ denotes the center of $I_{[\alpha]}$ in itself) as consequence of ${\mathcal Z}(P)=0$ and $\bigl[I_{[\alpha]}, I_{[\gamma]}\bigr] = I_{[\alpha]}I_{[\gamma]} = 0$ if $[\alpha] \neq [\gamma]$ (Corollary \ref{coro1}). We can apply Theorem \ref{teo3} to any $I_{[\alpha]}$ which allows us to conclude $I_{[\alpha]}$ is simple. It is clear  that the decomposition $P = \bigoplus_{[\alpha] \in \Lambda/\sim} I_{[\alpha]}$ satisfies the assertions of the theorem, completing the proof.
\end{proof}

\medskip


\medskip



\begin{thebibliography}{9}

\bibitem{MPJ} Ait Ben Haddou, M., Benayadi, S., Boulmane, S. (2016). Malcev-Poisson-Jordan algebras. {\it J. Algebra Appl.} {\bf 15}(9), 1650159, 26 pp. DOI: 10.1142/S0219498816501590

\bibitem{YoLieRinehart} Albuquerque, H., Barreiro, E., Calder\'on, A.J., S\'anchez, J.M. (2021). Split Lie-Rinehart algebras. {\it J. Algebra  Appl.} {\bf 20}(9), 2150164. DOI: 10.1142/S0219498821501644

\bibitem{super} Calder\'on, A.J. (2008). On split Lie algebras with symmetric root systems. {\it Proc. Indian Acad. Sci. (Math. Sci.)} {\bf 118}(3), 351-356. DOI: 10.1007/s12044-008-0027-3

\bibitem{YoPoisson} Calder\'on, A.J. (2012). On the structure of split non-commutative Poisson algebras. {\it Linear Multilinear Algebra} {\bf 60}(7), 775-785. DOI: 10.1080/03081087.2012.661428

\bibitem{YoMalcev} Calder\'on, A.J., Forero, M., S\'anchez, J.M. (2012). Split Malcev algebras. {\it Proc. Indian Acad. Sci. (Math. Sci.)} {\bf 122}(2), 181-187. DOI: 10.1007/s12044-012-0070-y

\bibitem{Malcev_Poisson} S\'anchez, J.M. (2021). On split Malcev Poisson algebras. {\it Siberian Math.} {\bf 62}, 511–520. DOI: 10.1134/S0037446621030149

\bibitem{Schue1} Schue, J.R. (1960). Hilbert Space methods in the theory of Lie algebras. {\it Trans. Amer. Math. Soc.} {\bf 95}, 69-80. DOI: 10.1090/S0002-9947-1960-0117575-1

\bibitem{Schue2} Schue, J.R. (1961). Cartan decompositions for $L^{*}$-algebras. {\it Trans. Amer. Math. Soc.} {\bf 98}, 334-349. DOI: 10.2307/1993500

\bibitem {Stumme} Stumme, N. (1999). The structure of Locally Finite Split Lie Algebras. {\it J. Algebra} {\bf 220}, 664-693. DOI: 10.1006/jabr.1999.7978

\end{thebibliography}
\end{document}